\providecommand{\U}[1]{\protect\rule{.1in}{.1in}}
\newtheorem{theorem}{Theorem}
\newtheorem{corollary}[theorem]{Corollary}
\newtheorem{definition}[theorem]{Definition}
\newtheorem{example}[theorem]{Example}
\newtheorem{lemma}[theorem]{Lemma}
\newtheorem{proposition}[theorem]{Proposition}
\newenvironment{proof}[1][Proof]{\noindent\textbf{#1.} }{\ \rule{0.5em}{0.5em}}
\begin{document}

\title{Preserving operators on semiprime $f$-algebras}
\author{\textsc{Jaber Jamel, Khalfaoui ADNEN}\\Research Laboratory of Algebra, Topology, Arithmetic and Order \ \\Departement of Mathematics Tunis-El Manar University.\\\texttt{{\small jamel.jaber@free.fr}}}
\maketitle

\begin{abstract}
It is an open problem whether a separating operator acting between semiprime
$f$-algebras is a weighted composition operator ( \cite[problem 2.6]{AAB}). We
prove that the answer is positive if \ and only if the separating operator is
almost contractive. As a consequence, we can generalized and strengthen some
well-known results on\ separating operators.

\end{abstract}

\textit{Keywords : }Composition operator, $f$ -algebra, Separating

\textit{MSC2010 : }06F25 \textperiodcentered\ 46E25

\section{Introduction}

Let $A$ and $B$ be two arbitrary algebras. A map $T:A\longrightarrow B$ is
said to be separating if $T(f)T(g)=0$ whenever $fg=0$ for all $f,g\in A$. Any
Composition operator (algebra homomorphism) is separating. Weighted
composition operators are important typical examples of separating map. The
question of whether or not a separating operator is weighted have been studied
by many authors in different cases. Arendt \cite{Arendt} has proved that every
order bounded separating linear map $T:C(X)\longrightarrow C(Y)$ is a multiple
of an algebra homomorphism, that is, there exist a function
$h:Y\longrightarrow X$ such that
\[
T(f)(y)=T(e).f\circ h(y)\text{ for all }y\in Y\text{ and }f\in C(X)\text{,}%
\]
where $e$ denotes the function $e(x)=1$ for all $x\in X$. Recently, Abid, Ben
Amor and Boulabiar \cite[Theorem 4.2 and corollary 4.3]{AAB} have been
generalized this result to a large class of regular separating operators
acting between unital $f$-algebras. More precisely, they proved that if $A$ is
an $f$-algebra with identity $e$ and $B$ is a semiprime $f$-algebra then a
regular operator $T:A\longrightarrow B$ is separating if and only if $T=Te.C$
with $C$ is a composition operator from $A$ into the the maximal ring of
quotients of $B$ (see \cite[corollary 4.3]{AAB}). Furthermore, they stated the
following open problem (see for instance \cite[problem 2.6]{AAB}) : Does the
last result hold if we suppose that the domain $f$-algebra is semiprime with
no identity? It should be pointed out by the way that this is true in the
$C_{0}(X)-C_{0}(Y)$ case (Theorem 1 in \cite{Jeang-wong}). Here, $C_{0}(X)$
(resp, $C_{0}(Y)$) is the semiprime $f$-algebra of continuous scalar-valued
functions on $X$ (resp, $Y$) vanishing at infinity. Unfortunately, the result
fails in general (see Example 6.6 in \cite{BBT}). The main purpose of this
paper is to give a necessary and sufficient condition for which a separating
regular linear map between semiprime $f$-algebras is a weighted composition
homomorphism. A synopsis of the content of this paper seems to be in order.

Let $A$ and $B$ be two semiprime $f$-algebras. By almost contractive operator
$T:A\longrightarrow B$ we mean a regular operators such that $T(\left[
0,I_{A}\right]  \cap A)$ is bounded, where $I_{A}$ denote the unit element of
the $f$-algebra $\mathrm{Orth}(A)$. Such operators can be seen as a
generalization of contractive operators acting between unital $f$-algebras. it
is not difficult to see that any weighted composition operator is automaticly
almost contractive. However, we give an example of a separating regular
operator $T$ from $A$ into $B$ which is not almost contractive. In spite of
that, we shall prove that any separating regular operator $T$ from $C_{0}(X)$
into $C_{0}(Y)$ is almost contractive. Using these remarks we shall prove that
a regular linear operator $T$ is a weighted composition operator if and only
if $T$ is separating and almost contractive operator. Moreover, we show that
this result generalized the afformentioned results. 

\section{ Preliminaries}

We take it for granted that the reader is familiar with the notions of vector
lattices (also called Riesz spaces) and regular operators such as lattice
homomorphisms and orthomorphisms. For terminology, notations and concepts not
explained in this paper we refer to the standard monographs \cite{Aliprantis}
by Aliprantis-Burkinshaw.

\textit{Beginning with the next paragraph, we shall impose as blanket
assumptions that all vector lattices under consideration are real and
Archimedean. Moreover, all given operators are supposed to be linear.}

An $f$-algebra $A$ is said to be semiprime $f$-algebra if $0$ is the only
nilpotent in $A$. Any unital $f$-algebra is semiprime. Moreover, the algebra
$C_{0}(X)$ of all real valued continuous functions defined on a locally
compact space $X$ vanishing at infinity is a typical example of semiprime $f$-algebra.

It is well-known that every semiprime $f$-algebra $A$ can be embedded as a
Riesz subspace and a ring ideal in the unital $f$-algebra $\mathrm{Orth}(A)$,
the set of all orthomorphisms on $A$, by identifying $f\in A$ with the
orthomorphism $\pi_{f}$ on $A$ defined by $\pi_{f}(g)=fg$ for all $g\in A$.
Notice here that the identity operator $I_{A}$ on $A$ is the unit element of
$\mathrm{Orth}(A)$. More information on the embedding of semiprime $f$
-algebras in algebras of orthomorphisms can be found in \cite[Section
12.3]{BK}.

We will denote by $U_{A}$ the set defined by
\[
U_{A}:=\left[  0,I_{A}\right]  \cap A=\left\{  f\in A:0\leq f\leq
I_{A}\right\}  =\left\{  f\in A:f^{2}\leq f\right\}  \text{.}%
\]
It is well known that the set $U_{A}$ is an approximate unit on $A$, that is
\[
\sup\left\{  fg:f\in U_{A}\right\}  =g\text{ for all }0\leq g\in A\text{,}%
\]
(see for instance \cite[Theorem 2.3]{HD1984}). As we shall see later, the set
$U_{A}$ will play an important role in this work.

We adopt in this paper the notations of \cite{AAB}. Recall that an operator
$T:A\longrightarrow B$ is said to be regular if $T$ is the difference of two
positive operators. The set $\mathcal{L}^{r}(A,B)$ of all regular operators is
an ordered vector space with respect to the pointwise addition, scalar
multiplication, and ordering.

The linear operator $T\in\mathcal{L}^{r}(A,B)$ is said to be a composition
operator if $T(fg)=T(f)T(g)$ for all $f,g\in A$. The operator $T$ is called a
separating (or disjointness preserving) operator $TfTg=0$ whenever $fg=0$ in
$A$. Since both $A$ and $B$ are semiprime, we derive that $T$ is separating if
and only if $\left\vert Tf\right\vert \wedge\left\vert Tg\right\vert =0$ in
$B$ whenever $\left\vert f\right\vert \wedge\left\vert g\right\vert =0$ in
$A$. It follows that a positive operator is separating if and only if $T$ is a
lattice homomorphism. If $T\in\mathcal{L}^{r}(A,B)$ a separating operator then
the modulus $\left\vert T\right\vert $ exists, satisfies $\left\vert
T\right\vert \left\vert f\right\vert =\left\vert Tf\right\vert $ for all $f\in
A$ and $\left\vert T\right\vert $ is a lattice homomorphism.

An operator $T\in\mathcal{L}^{r}(A,B)$ is said to be contractive if
$\left\vert Ta\right\vert \in U_{B}$ for all $a\in U_{A}$. An equivalent and
more intrinsic statement is that $\left\vert Ta\right\vert \leq I_{B}$ for all
$a\in U_{A}$. Notice, here that $I_{B}$ is the unit element of $\mathrm{Orth}%
(B)$. We shall extend this definition as follows.

\begin{definition}
Let $T\in\mathcal{L}^{r}(A,B)$. The operator $T$ is said to be almost
contractive if there exist $\omega\in\mathrm{Orth}(B)$ such that $\left\vert
Tf\right\vert \leq\omega$ for all $f\in U_{A}$. In this case we define the
vector $\omega_{T}$ by
\[
\omega_{T}:=\sup\left\vert T(U_{A})\right\vert =\sup\left\{  \left\vert
Tf\right\vert :f\in U_{A}\right\}  \text{.}%
\]
The supremum is taken, if necessarily, in the Dedekind completion
$\mathrm{Orth}(B)^{\sigma}$ of $\mathrm{Orth}(B)$.
\end{definition}

This definition is motivated by the following examples

\begin{enumerate}
\item Any composition operator $C\in\mathcal{L}^{r}(A,B)$ is a almost
contractive since It is contractive . To see this, notice first that $C$ is a
positive operator (see \cite[Theorem 4.3]{Triki}). Let $f\in U_{A}$ then
$f^{2}\leq f$, so $C(f)^{2}\leq C(f)$. Consequently, $C(f)\in U_{B}$. This is
implies that $C(U_{A})\subseteq U_{B}$ and $\omega_{C}\leq I_{B}$.

It follows that any weighted composition operator $T:A\longrightarrow B$ is
automatically almost contractive.

\item Let $T\in\mathcal{L}^{r}(A,B)$ be a separating operator. If the domain
$A$ has a unit element $I_{A}$, then $T$ is almost contractive and $\omega
_{T}=\left\vert T(I_{A})\right\vert $. Indeed, observe that if $f\in A$ with
$\left\vert f\right\vert \leq I_{A}$ then%
\[
\left\vert Tf\right\vert \leq\left\vert T\right\vert (\left\vert f\right\vert
)\leq\left\vert T\right\vert (I_{A})=\left\vert T(I_{A})\right\vert \text{.}%
\]

\end{enumerate}

In contrary to the unital case, a separating operator between semiprime
$f$-algebras may not be almost contractive as shown in the following example

\begin{example}
(see Example 6.6 in \cite{BBT}). Let $A$ be the $f$-algebra of the piecewise
polynomial functions on $\mathbb{R}_{\geq0}$ that are $0$ at $0$. Let $T$ be
the real-valued operator defined by $T(f)=f_{d}^{\prime}(0)$ for all $f\in A$.
It is easy to check that $T$ is a positive separating operator. However, $T$
is not almost contractive operator. Indeed, for all $n\in\mathbb{N}$, let
$f_{n}\in U_{A}$ defined by
\[
f_{n}(x)=%
\genfrac{\{}{.}{0pt}{}{nx\text{ if }0\leq x\leq\frac{1}{n}}{1\text{ if }%
x\geq\frac{1}{n}}%
\]
Then $T(f_{n})=n$ for all $n\in\mathbb{N}$.
\end{example}

In spite of that, we shall prove that any separating operator from $C_{0}(X)$
into $C_{0}(Y)$, for locally compact Hausdorff spaces $X$ and $Y$, is
automatically almost contractive

\begin{proposition}
\label{P1}Let $T\in\mathcal{L}^{r}(C_{0}(X),C_{0}(Y))$ be a separating
operator, then $T$ is almost contractive.
\end{proposition}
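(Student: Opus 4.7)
The plan is to reduce almost-contractivity of $T$ to the classical fact that every positive operator between Banach lattices is automatically norm-continuous. Since $T$ is separating and regular, the preliminaries yield that the modulus $\left\vert T\right\vert$ exists, is a lattice homomorphism $C_{0}(X)\to C_{0}(Y)$, and satisfies $\left\vert Tf\right\vert=\left\vert T\right\vert(\left\vert f\right\vert)$ for every $f\in C_{0}(X)$. For $f\in U_{C_{0}(X)}$ one has $f\geq 0$, hence $\left\vert Tf\right\vert=\left\vert T\right\vert(f)$, and it therefore suffices to dominate the family $\{\left\vert T\right\vert(f):f\in U_{C_{0}(X)}\}$ from above by a single element of $\mathrm{Orth}(C_{0}(Y))$.

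Next I would invoke the standard identification $\mathrm{Orth}(C_{0}(X))\cong C_{b}(X)$, the $f$-algebra of bounded continuous functions on $X$, and similarly for $Y$. Under this identification the unit $I_{C_{0}(X)}$ becomes the constant function $1$, and $U_{C_{0}(X)}=\{f\in C_{0}(X):0\leq f(x)\leq 1 \text{ for every } x\in X\}$, which is precisely the positive part of the closed unit ball of $C_{0}(X)$ in the supremum norm $\left\Vert \cdot\right\Vert_{\infty}$.

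The core step is then to apply the classical theorem that every positive operator between Banach lattices is norm-continuous. Applied to $\left\vert T\right\vert:C_{0}(X)\to C_{0}(Y)$ (both equipped with $\left\Vert \cdot\right\Vert_{\infty}$), this produces a finite constant $M:=\left\Vert \left\vert T\right\vert\right\Vert_{op}$ with $\left\Vert \left\vert T\right\vert(f)\right\Vert_{\infty}\leq M\left\Vert f\right\Vert_{\infty}$ for every $f\in C_{0}(X)$. Consequently, for each $f\in U_{C_{0}(X)}$ one has $\left\vert T\right\vert(f)(y)\leq M$ for all $y\in Y$; that is, $\left\vert Tf\right\vert=\left\vert T\right\vert(f)\leq M\cdot I_{C_{0}(Y)}$ in $\mathrm{Orth}(C_{0}(Y))$. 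Taking $\omega:=M\cdot I_{C_{0}(Y)}$ then exhibits $T$ as almost contractive with $\omega_{T}\leq\omega$.

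There is no substantive obstacle here: once the reduction to $\left\vert T\right\vert$ (granted by the separating hypothesis) is in place, the conclusion falls out directly from the automatic norm-continuity of positive operators on Banach lattices. The only care needed is in recalling the two standard identifications $\mathrm{Orth}(C_{0}(\cdot))\cong C_{b}(\cdot)$ and $U_{C_{0}(\cdot)}=\{f:0\leq f\leq 1\}$, both of which are routine. It is also worth noting that this argument highlights exactly where Example~3 breaks down: the domain $f$-algebra of piecewise polynomial functions there is not a Banach lattice for the relevant norm, so the automatic-boundedness theorem does not apply.
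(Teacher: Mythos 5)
Your proof is correct and is in substance the same as the paper's: the paper's explicit construction of $g=\sum_{k}f_{k}/k^{2}$ and the ensuing contradiction with $T(g)\in C_{0}(Y)$ is precisely the standard gliding-hump proof of the automatic norm-continuity of positive (hence regular) operators on Banach lattices, which you instead invoke as a known theorem. The reduction to $\left\vert T\right\vert$ and the identification $\mathrm{Orth}(C_{0}(Y))\cong C_{b}(Y)$ that you spell out are used implicitly in the paper as well, so the two arguments coincide in substance.
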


\begin{proof}
Suppose that the set $\left\vert T(U_{C_{0}(X)})\right\vert $ is not bounded
on $C_{b}(Y)=\mathrm{Orth}(C_{0}(Y))$, then there exist a sequence
$(f_{n})_{n\in\mathbb{N}}$ and a sequence $(y_{n})_{n\in\mathbb{N}}$ of $Y$
such that $0\leq f_{n}\leq1$ and $\left\vert T(f_{n})(y_{n})\right\vert \geq
n^{3}$. Define the uniform convergent sum%
\[
g(x)=%
{\displaystyle\sum\limits_{k=1}^{\infty}}
\frac{f_{k}(x)}{k^{2}}\text{ \ \ for all }x\in X.
\]
Obviously $g\in C_{0}(X)$. On the other hand, we have for all $n\in\mathbb{N}%
$,
\[
n\leq\frac{\left\vert T(f_{n})(y_{n})\right\vert }{n^{2}}\leq\left\vert
T(g)(y_{n})\right\vert
\]
a contradiction, since $T$ is norm continuous.
\end{proof}

\begin{proposition}
\label{P}Let $T\in\mathcal{L}^{r}(A,B)$ be a separating operator. $T$ is
almost contractive if and only if the positive and the negative parts $T^{+}$
and $T^{-}$ are almost contractives. Moreover, $\omega_{T^{+}}$ and
$\omega_{T^{-}}$ are components of $\omega_{T}$.
\end{proposition}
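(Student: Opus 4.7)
The proof rests on the structure of the modulus of a separating operator together with the order-closedness of bands in the Dedekind complete lattice $\mathrm{Orth}(B)^{\sigma}$. My first observation is the pointwise disjointness $T^{+}(f)\wedge T^{-}(g)=0$ for all $f,g\in A^{+}$. Since $T$ is separating, $|T|$ is a lattice homomorphism satisfying $|T|(h)=|Th|$ for every $h\in A$, so for $h\geq 0$ one has $T^{+}(h)=\tfrac{1}{2}(|T|(h)+T(h))=(Th)^{+}$ and similarly $T^{-}(h)=(Th)^{-}$; these are disjoint in $B$. Applying this to $h=f+g$ and using positivity of $T^{\pm}$ yields $T^{+}(f)\leq T^{+}(f+g)\perp T^{-}(f+g)\geq T^{-}(g)$, from which the claimed disjointness follows.

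For the equivalence, first assume $T$ is almost contractive and fix $f\in U_{A}$. Since $T^{+}(f)+T^{-}(f)=|T|(f)=|Tf|\leq \omega_{T}$ with both summands positive, each satisfies $T^{\pm}(f)\leq \omega_{T}$; hence $T^{+}$ and $T^{-}$ are almost contractive with $\omega_{T^{\pm}}\leq \omega_{T}$. Conversely, if both $T^{\pm}$ are almost contractive, then for every $f\in U_{A}$ we have $|Tf|=T^{+}(f)+T^{-}(f)\leq \omega_{T^{+}}+\omega_{T^{-}}$, so $T$ is almost contractive.

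For the components assertion I would establish the two identities $\omega_{T}=\omega_{T^{+}}+\omega_{T^{-}}$ and $\omega_{T^{+}}\wedge \omega_{T^{-}}=0$. The second half of the previous step already gives $\omega_{T}\leq \omega_{T^{+}}+\omega_{T^{-}}$; for the reverse inequality, $U_{A}$ is upward directed (if $f_{1},f_{2}\leq I_{A}$ then $f_{1}\vee f_{2}\leq I_{A}$), so for any $f_{1},f_{2}\in U_{A}$ the element $f:=f_{1}\vee f_{2}$ lies in $U_{A}$ and positivity of $T^{\pm}$ gives $|Tf|=T^{+}(f)+T^{-}(f)\geq T^{+}(f_{1})+T^{-}(f_{2})$; varying $f_{1},f_{2}$ independently yields $\omega_{T}\geq \omega_{T^{+}}+\omega_{T^{-}}$. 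For the disjointness, fix $g\in U_{A}$: the values $T^{+}(f)$, $f\in U_{A}$, all lie in the band $\{T^{-}(g)\}^{d}$, and since bands in $\mathrm{Orth}(B)^{\sigma}$ are order-closed the supremum $\omega_{T^{+}}$ does too, whence $\omega_{T^{+}}\wedge T^{-}(g)=0$; a symmetric argument letting $g$ vary yields $\omega_{T^{+}}\wedge \omega_{T^{-}}=0$, and together with $\omega_{T}=\omega_{T^{+}}+\omega_{T^{-}}$ this expresses $\omega_{T^{\pm}}$ as complementary components of $\omega_{T}$. The delicate step, and the main obstacle, is exactly this passage from pointwise disjointness of operator values to disjointness of their suprema, which is precisely what the order-closedness of bands in the Dedekind completion is tailor-made to handle.
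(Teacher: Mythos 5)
Your proof is correct and follows essentially the same route as the paper's: the identity $|Tf|=T^{+}(f)+T^{-}(f)$ on $U_{A}$ for both directions of the equivalence, the estimate $T^{+}(f)+T^{-}(g)\leq |T|(f\vee g)\leq\omega_{T}$ for the additivity of the bounds, and the pointwise disjointness $T^{+}(f)\wedge T^{-}(g)=0$ promoted to $\omega_{T^{+}}\wedge\omega_{T^{-}}=0$ by passing to suprema one variable at a time. You merely supply a bit more detail than the paper does at two points (the identification $T^{\pm}(h)=(Th)^{\pm}$ for $h\geq 0$, and the order-closedness of bands justifying the passage to the supremum), which is welcome but not a different argument.
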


\begin{proof}
The equality $\left\vert T(f)\right\vert =T^{+}(f)+T^{-}(f)$ for all $f\in
U_{A}$ implies that $T^{+}$ and $T^{-}$ are almost contractives and
$\omega_{T}\leq\omega_{T^{+}}+\omega_{T^{-}}$. Furthermore, for all $f,g\in
U_{A}$ we have
\[
T^{+}(f)+T^{-}(g)\leq T^{+}(f\vee g)+T^{-}(f\vee g)=\left\vert T\right\vert
(f\vee g)\leq\omega_{T}%
\]
we derive easily that $\omega_{T^{+}}+\omega_{T^{-}}\leq\omega_{T}$. It
remains to prove that $\omega_{T^{+}}\wedge\omega_{T^{-}}=0$. To this end,
observe first that $T^{+}(f)\wedge T^{-}(g)=0$ for all $0\leq f,g\in U_{A}$.
This is follows from the inequalities
\[
0\leq T^{+}(f)\wedge T^{-}(g)\leq T^{+}(f+g)\wedge T^{-}(f+g)=0\text{.}%
\]
So, $\omega_{T^{+}}\wedge T^{-}(g)=0$ for all $g\in U_{A}$ which implies that
$\omega_{T^{+}}\wedge\omega_{T^{-}}=0$.
\end{proof}

The following lemma will be used in what follows, see \cite[Theorem 4]{Jaber}
for the proof.

\begin{proposition}
Let $T:A\longrightarrow B$ be a regular operator between semiprime
$f$-algebras $A$ and $B$. Then $T=0$ if and only if $T(f)=0$ for all $f\in
U_{A}$.
\end{proposition}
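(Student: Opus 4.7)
The forward direction is immediate from $U_A \subseteq A$, so suppose $T(f)=0$ for every $f \in U_A$ with the goal of showing $T\equiv 0$.

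The first step I would take is to upgrade the hypothesis from $U_A$ to every order interval $[-f,f]$ with $f \in U_A$: if $h \in A$ satisfies $|h|\le f$, then $h^{+}$ and $h^{-}$ both lie in $A$ (since $A$ is a Riesz subspace of $\mathrm{Orth}(A)$) and satisfy $0\le h^{\pm}\le f\le I_A$, so $h^{+},h^{-}\in U_A$ and $T(h)=T(h^{+})-T(h^{-})=0$. In particular the kernel of $T$ is solid in each order interval of $U_A$.

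The main step is then to prove $T(g)=0$ for a fixed $0\le g\in A$; the general case follows by Riesz decomposition. Here the approximate unit property $g=\sup\{fg:f\in U_A\}$ gives an upward-directed net in $A$ approximating $g$ (using that $A$ is a ring ideal of $\mathrm{Orth}(A)$). The product $fg$ itself need not belong to $U_A$, since one has $fg\le g$ but not necessarily $fg\le I_A$. To bridge this gap I would use that every semiprime $f$-algebra embeds as an order ideal in $\mathrm{Orth}(A)$, so that the truncations $(fg)\wedge nI_A$ lie in $A$; after scaling by $n^{-1}$ they belong to $U_A$ and are therefore annihilated by $T$. Sending $n\to\infty$ and using the order-boundedness of $T$ together with Archimedeanness yields $T(fg)=0$ for every $f\in U_A$, and a second order-limit over $f$ in the approximate unit then gives $T(g)=0$.

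The principal obstacle is precisely these two order-limit passages: a regular operator between Archimedean Riesz spaces is only order-bounded, not order-continuous, so passing a limit through $T$ is not automatic. The $f$-algebra compatibility between multiplication and lattice operations --- together with a computation performed in the Dedekind completion $\mathrm{Orth}(B)^{\sigma}$ where the required suprema exist and behave well --- is what makes these steps go through, and this extension-and-descent argument is the substance of \cite[Theorem 4]{Jaber}.
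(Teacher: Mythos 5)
The paper does not actually prove this proposition; it only cites \cite[Theorem 4]{Jaber}, so the comparison has to be between your sketch and a complete argument. Your opening reductions are fine: the forward direction is trivial, and the observation that $T$ annihilates every $h\in A$ with $|h|\le f$ for some $f\in U_{A}$ is correct. But the core of your argument has two genuine gaps. First, to place the truncations $(fg)\wedge nI_{A}$ inside $A$ you assert that $A$ embeds as an \emph{order} ideal of $\mathrm{Orth}(A)$; the embedding the paper uses (and the one available in general) is only as a Riesz subspace and a \emph{ring} ideal, and being dominated by an element of $A$ does not by itself put an orthomorphism into $A$. This particular step is repairable --- for instance $g\wedge nf$ lies in $A$ because $A$ is a Riesz subspace, and $0\le g\wedge nf\le nI_{A}$ gives $\frac{1}{n}(g\wedge nf)\in U_{A}$ --- but as written your truncations are not known to be in the domain of $T$.

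Second, and more seriously, the two ``order-limit passages'' you yourself flag as the principal obstacle are exactly where the theorem lives, and you do not resolve them. A regular operator is only order bounded: from $T(h_{n})=0$ and $h_{n}\uparrow h$ one only gets $|T(h)|=|T(h-h_{n})|\le S(h-h_{n})$ for some positive $S$, and $S(h-h_{n})$ need not decrease to $0$; the paper's own Example in Section 2 (the functional $f\mapsto f_{d}'(0)$ on piecewise polynomials) is a positive operator that is not order continuous, so this is not a removable technicality. What makes the cited result work is a quantitative $f$-algebra estimate that substitutes for order continuity, of the type $(g-ng^{2})^{+}\le\frac{1}{n}I_{A}$ and $(g-nI_{A})^{+}\le\frac{1}{n}g^{2}$ for $0\le g$, which converts the error terms either into elements of $\frac{1}{n}U_{A}$ (hence annihilated by hypothesis) or into terms dominated by $\frac{1}{n}S(g^{2})$, after which the Archimedean property finishes. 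You name the obstacle and then defer its resolution to \cite[Theorem 4]{Jaber} --- the very citation the paper gives --- so the proposal identifies the right ingredients but does not constitute a proof.
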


\section{Characterization of weighted composition opertaors}

Recall once again that $A$ and $B$ are semiprime $f$ -algebras. The symbole
$\mathfrak{C}(A,B)$ is used to indicate the set of all composition operators
in $\mathcal{L}^{r}(A,B)$. Recall that an operator $C\in\mathfrak{C}(A,B)$ if
\[
C(fg)=C(f)C(g)\text{ for all }f,g\in A\text{.}%
\]
Of course, any composition operators between semiprime $f$-algebras is a
positive and separating operator. The following characteriztion of composition
operator can be found in \cite[Theorem 8]{Jaber} and it will be used in the
proof of our main result.

\begin{theorem}
\label{Jaber}Let $C$ $\in\mathcal{L}^{r}(A,B)$. Then the following are equivalents:

\begin{enumerate}
\item[(i)] $C$ is a composition operator

\item[(ii)] $C$ is a positive separating and almost contractive operator with
$\omega_{C}$ is idemptent.
\end{enumerate}
\end{theorem}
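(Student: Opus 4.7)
The plan is to reduce to the unital-algebra setting of Corollary~4.3 of AAB by extending $C$ to the ambient unital $f$-algebra $\mathrm{Orth}(A)$, and then to exploit the identity $\tilde{C}(I_{A}) = \omega_{C}$ together with the idempotence of $\omega_{C}$.

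For the direction (i) $\Rightarrow$ (ii): $C$ is positive by the cited Theorem~4.3 of Triki and separating by definition. The first example after the definition of almost contractiveness already shows $C$ is contractive, hence almost contractive with $\omega_{C} \le I_{B}$. For the idempotence of $\omega_{C}$, I would extend $C$ to a multiplicative operator $\tilde{C} \colon \mathrm{Orth}(A) \to \mathrm{Orth}(B)^{\sigma}$ characterised by $\tilde{C}(\phi) \cdot C(f) = C(\phi f)$ for $\phi \in \mathrm{Orth}(A)$ and $f \in A$ (the almost contractive bound makes the supremum defining $\tilde{C}(\phi)$ converge in $\mathrm{Orth}(B)^{\sigma}$, and the criterion that a regular operator vanishes iff it vanishes on $U_{A}$ gives uniqueness). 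One then verifies $\tilde{C}(I_{A}) = \omega_{C}$ (upper bound from positivity of $\tilde{C}$, lower bound from $\tilde{C}|_{A} = C$ and the approximate-unit property) and concludes
\[
\omega_{C}^{2} = \tilde{C}(I_{A})^{2} = \tilde{C}(I_{A}^{2}) = \tilde{C}(I_{A}) = \omega_{C}.
\]

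For the harder direction (ii) $\Rightarrow$ (i): given $C$ positive, separating, almost contractive with $\omega_{C}$ idempotent, I would construct an extension $\tilde{C} \colon \mathrm{Orth}(A) \to \mathrm{Orth}(B)^{\sigma}$ by setting, for $\phi \in \mathrm{Orth}(A)^{+}$,
\[
\tilde{C}(\phi) := \sup \{ C(\phi f) : f \in U_{A} \},
\]
which exists in the Dedekind completion because $\phi f \in A$ (since $A$ is a ring ideal of $\mathrm{Orth}(A)$) and the family $\{C(\phi f)\}$ is order-bounded by $\lambda \omega_{C}$ whenever $|\phi| \le \lambda I_{A}$. Routine checks---positivity, linearity, separating, $\tilde{C}|_{A} = C$---use the decomposition of Proposition~\ref{P} together with the criterion that a regular operator vanishes iff it vanishes on $U_{A}$. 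Since $\mathrm{Orth}(A)$ is unital with identity $I_{A}$ and $\mathrm{Orth}(B)^{\sigma}$ is semiprime, Corollary~4.3 of AAB applies and yields $\tilde{C} = \tilde{C}(I_{A}) \cdot D = \omega_{C} \cdot D$ for some composition operator $D$ into the maximal ring of quotients of $\mathrm{Orth}(B)^{\sigma}$. Using the idempotence $\omega_{C}^{2} = \omega_{C}$,
\[
\tilde{C}(f)\, \tilde{C}(g) = \omega_{C}^{2}\, D(f)\, D(g) = \omega_{C}\, D(fg) = \tilde{C}(fg)
\]
for all $f, g \in \mathrm{Orth}(A)$, and restricting to $A$ gives $C(fg) = C(f) C(g)$.

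The principal obstacle is the extension step common to both directions: defining $\tilde{C}$ on all of $\mathrm{Orth}(A)$ and verifying it is regular and separating with $\tilde{C}|_{A} = C$ and $\tilde{C}(I_{A}) = \omega_{C}$. The almost contractive hypothesis is precisely what keeps the defining supremum order-bounded in $\mathrm{Orth}(B)^{\sigma}$, while the $U_{A}$-vanishing criterion is the tool for pinning down the restriction $\tilde{C}|_{A} = C$; once the extension is in hand, both directions reduce to short algebraic manipulations built on Corollary~4.3 of AAB.
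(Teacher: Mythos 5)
First, a point of comparison: the paper offers no proof of this theorem at all --- it is imported verbatim from \cite[Theorem 8]{Jaber} --- so there is nothing in-paper to measure your argument against. Judged on its own merits, your strategy of unitizing the domain by passing to $\mathrm{Orth}(A)$ and then invoking the Abid--Ben Amor--Boulabiar result for unital domains is a natural one, but the step you label routine is precisely where the whole difficulty of the theorem is concentrated, and as written there is a genuine gap there.

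The gap is the claim $\tilde{C}|_{A}=C$. For $0\leq a\in A$ your definition gives $\tilde{C}(a)=\sup\{C(af):f\in U_{A}\}$, so the claim amounts to $\inf\{C(a(I_{A}-f)):f\in U_{A}\}=0$. The approximate-unit property yields $\sup\{af:f\in U_{A}\}=a$ in $A$, but $C$ is merely positive, not order continuous, so there is no reason for $C$ to carry this supremum to $C(a)$. The paper's own example makes this concrete: for the derivative functional $T(f)=f_{d}^{\prime}(0)$ on piecewise polynomials vanishing at $0$ one has $T(af)=a^{\prime}(0)f(0)+a(0)f^{\prime}(0)=0$ for all $a,f\in A$, so $\sup_{f}T(af)=0\neq T(a)$. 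That operator is of course excluded by the almost contractive hypothesis, but this shows that $\tilde{C}|_{A}=C$ can only be established by actually \emph{using} almost contractivity and the idempotence of $\omega_{C}$, which your argument never does at this point; and the $U_{A}$-vanishing criterion is of no help, since applying it to $C-\tilde{C}|_{A}$ merely restates the same identity for $f\in U_{A}$. Both displayed computations in your (ii)$\Rightarrow$(i) step ($C(f)=\omega_{C}D(f)$ and $\tilde{C}(fg)=C(fg)$) rest on this unproved identity, so the implication is not established. The same difficulty infects your (i)$\Rightarrow$(ii): the relation $\tilde{C}(\phi)C(f)=C(\phi f)$ does not determine $\tilde{C}(\phi)$ outside the band generated by the range of $C$, and the multiplicativity of the extension is asserted rather than proved. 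For that direction, incidentally, no extension is needed: for $f\in U_{A}$ one has $nf^{2}\wedge f\uparrow f$, hence $C(f)\omega_{C}=\sup_{g\in U_{A}}C(fg)\geq\sup_{n}\bigl(nC(f)^{2}\wedge C(f)\bigr)=C(f)$ because $C(f)$ and $C(f)^{2}=C(f^{2})$ generate the same band in $\mathrm{Orth}(B)^{\sigma}$; combined with $\omega_{C}\leq I_{B}$ this gives $C(f)=C(f)\omega_{C}\leq\omega_{C}^{2}$ and therefore $\omega_{C}\leq\omega_{C}^{2}\leq\omega_{C}$.
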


Recall that an operator $T\in\mathcal{L}^{r}(A,B)$ is called a weighted
composition operator if there exist $\omega\in B$ and a composition operator
$C$ such that $T=\omega C$. Obviously, any weighted composition operator in
$\mathcal{L}^{r}(A,B)$ is both a separating and almost contractive operator.
Example 1.4 in \cite{AAB} shows that a separating and almost contractive
operator need not be a weighted composition operator. In spite of this, we
shall obtain a quite satisfactory condition on $\omega_{T}$ for a separating
and almost contractive operator $T$ to be a weighted composition operator.

Recall that a vector $f\in B$ is said to be von Neumann regular if there exist
$g\in B$ such that $fg^{2}=f$. To prove the central result of this section we
need the following lemma.

\begin{lemma}
\label{L}Let $\omega$ be a positive von Neumann element in a semiprime
$f$-algebra $B$ then any component of $\omega$ is von Neumann regular.
\end{lemma}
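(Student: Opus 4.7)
The plan is to prove that the very same element $g \in B$ witnessing $\omega g^2 = \omega$ also witnesses the von Neumann regularity of every component of $\omega$. The idea is to perform a disjointness argument in the unital overalgebra $\mathrm{Orth}(B)$, inside which $B$ sits as a ring and order ideal, and then exploit the fact that a component of $\omega$ is dominated by $\omega$.

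First, I would lift the hypothesis to $\mathrm{Orth}(B)$. Viewing $\omega,g \in B \subseteq \mathrm{Orth}(B)$, the equality $\omega g^2 = \omega$ rewrites as $\omega(I_B - g^2) = 0$. Since $\mathrm{Orth}(B)$ is a unital, hence semiprime, $f$-algebra, the product of two elements vanishes if and only if their absolute values are disjoint; together with $\omega \geq 0$ this gives $\omega \wedge |I_B - g^2| = 0$, so $|I_B - g^2| \in \{\omega\}^{\perp}$. I would then upgrade this to the claim: every $f \in B$ with $0 \leq f \leq \omega$ satisfies $fg^2 = f$. Indeed, such an $f$ lies in $\{\omega\}^{\perp\perp}$, because for any $h \perp \omega$ we have $|h| \wedge f \leq |h| \wedge \omega = 0$; hence $f$ is disjoint from $|I_B - g^2|$, and in an $f$-algebra disjointness of absolute values forces the product $f(I_B - g^2)$ to vanish.

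Finally, since a component $e$ of $\omega$ is in particular an element of $B$ with $0 \leq e \leq \omega$, the previous step yields $eg^2 = e$ with $g \in B$, which is exactly the von Neumann regularity of $e$. The only delicate point is the initial passage from the equation $\omega(I_B - g^2) = 0$ to the disjointness statement $\omega \wedge |I_B - g^2| = 0$, where one must remember that multiplication in $B$ agrees with multiplication in $\mathrm{Orth}(B)$ and that $\mathrm{Orth}(B)$ is semiprime; both are standard facts recalled in the Preliminaries. I note in passing that the argument actually proves something a little stronger, namely that every element of $B$ in the order interval $[0,\omega]$ is von Neumann regular with the same witness $g$, since nothing is used about the component property beyond the inequality $0 \leq e \leq \omega$.
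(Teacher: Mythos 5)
Your disjointness argument is sound in structure, but it is calibrated to the formula $fg^{2}=f$, which is evidently a typo in the paper's definition of von Neumann regularity: the paper's own proof of this lemma starts from $\omega^{2}v=\omega$, and the final theorem concludes $p=p^{2}v$, so the intended meaning is the standard commutative-ring one, $f^{2}v=f$. Under the literal reading $fg^{2}=f$ the notion degenerates (in $C[0,1]$, taking $g=1$ shows every element would be ``regular''), and the stronger conclusion you claim in passing --- that every element of $[0,\omega]$ is regular with the same witness --- is false for the intended definition: $\omega=1$ is von Neumann regular in $C[0,1]$, yet $f(x)=x$ lies in $[0,1]$ and is not, since $x^{2}g(x)=x$ would force $g(x)=1/x$. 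So, as written, your proof establishes a different (and trivialized) statement, and the fact that it never uses the component hypothesis is precisely the symptom.

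The good news is that your argument survives the correction with one extra line. From $\omega^{2}v=\omega$ you get $\omega(\omega v-I_{B})=0$ in the semiprime unital $f$-algebra $\mathrm{Orth}(B)$, hence $\omega\wedge\left\vert \omega v-I_{B}\right\vert =0$; your band argument then gives $f(\omega v-I_{B})=0$, i.e.\ $f=f\omega v$, for every $0\leq f\leq\omega$. For a component $e$ of $\omega$ you must now actually use the component property: since $e(\omega-e)=0$, one has $e\omega=e^{2}$, whence $e=e\omega v=e^{2}v$. The patched proof is genuinely different from the paper's, which never leaves $B$: there one writes $\omega=\omega_{1}+\omega_{2}$ with $\omega_{1}\omega_{2}=0$, deduces $\omega_{1}^{2}v-\omega_{1}=\omega_{2}-\omega_{2}^{2}v$ from $\omega^{2}v=\omega$, squares to get a product of disjoint factors, and invokes semiprimeness to conclude $\omega_{1}^{2}v=\omega_{1}$. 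Your route through $\mathrm{Orth}(B)$ and the double orthocomplement is somewhat heavier machinery for the same underlying disjointness idea, but once corrected it has the merit of isolating exactly where the component hypothesis enters.
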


\begin{proof}
Let $0\leq\omega_{1}\leq\omega$ be a component of $\omega$. There exist $0\leq
v\in B$ such that $\omega^{2}v=\omega$. Let $\omega_{2}=\omega-\omega_{1}$ and
observe that $\omega_{1}\omega_{2}=0$ so, $\omega^{2}=\omega_{1}^{2}%
+\omega_{2}^{2}$. The equality $\omega^{2}v=\omega$ implies that $\omega
_{1}^{2}v-\omega_{1}=\omega_{2}-\omega_{2}^{2}v$. Hence
\[
(\omega_{1}^{2}v-\omega_{1})^{2}=(\omega_{1}^{2}v-\omega_{1})(\omega
_{2}-\omega_{2}^{2}v)=0\text{.}%
\]
Whence, $\omega_{1}^{2}v=\omega_{1}$ and $\omega_{1}$ is von Neumann regular.
\end{proof}

One can see an operator $T\in\mathcal{L}^{r}(A,B)$ as a regular operator from
the semiprime $f$-algebra $A$ to the Dedekind complet unital $f$-algebra
$\mathrm{Orth}(B)^{\sigma}$, so we will suppose that $B$ is a Dedekind
complete $f$-algebra with unit element $I_{B}$.

We have gathered thus all the ingredients we need for the proof of the central
result of this section.

\begin{theorem}
\label{Cent}Let $T\in\mathcal{L}^{r}(A,B)$ be an almost contractive operator
such that $\omega_{T}$ is a von Numan regular. Then $T$ is separating operator
if and only if there exists a composition operator $C$ $\in\mathfrak{C}(A,B)$
such that $T=\omega C$ with $\omega=\omega_{T^{+}}-\omega_{T^{-}}$.
\end{theorem}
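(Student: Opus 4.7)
The ``if'' direction is immediate: if $T = \omega C$ with $C \in \mathfrak{C}(A,B)$, then for $f,g \in A$ with $fg = 0$ we have $T(f)T(g) = \omega^{2}C(f)C(g) = \omega^{2}C(fg) = 0$, so $T$ is separating. The substance lies in the converse, which I will carry out in three stages: decompose $T = T^{+} - T^{-}$, build composition operators $C^{\pm}$ for each part, and glue them.

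For the decomposition, Proposition~\ref{P} gives that $T^{\pm}$ are almost contractive with $\omega_{T^{+}}, \omega_{T^{-}}$ disjoint components of $\omega_{T}$. By Lemma~\ref{L} each $\omega_{T^{\pm}}$ is von Neumann regular, so I pick $0 \le v^{\pm} \in B$ with $\omega_{T^{\pm}}^{2} v^{\pm} = \omega_{T^{\pm}}$; then $p^{\pm} := \omega_{T^{\pm}} v^{\pm}$ is idempotent with $p^{\pm}\omega_{T^{\pm}} = \omega_{T^{\pm}}$. The key reduction is to replace $v^{\pm}$ by $p^{\pm}v^{\pm}$, which still regularises $\omega_{T^{\pm}}$ and leaves $p^{\pm}$ unchanged, but now places $v^{\pm}$ in the band generated by $p^{\pm}$. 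Since
\[
p^{+}p^{-} \;=\; v^{+}v^{-}\,\omega_{T^{+}}\omega_{T^{-}} \;=\; 0,
\]
these bands are disjoint, yielding the crucial orthogonalities $v^{+}v^{-} = 0$, $\omega_{T^{+}}v^{-} = 0$, and $\omega_{T^{-}}v^{+} = 0$.

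Next, define $C^{\pm}(f) := v^{\pm}T^{\pm}(f)$. Each $C^{\pm}$ is positive; it is separating because $T^{\pm}$ is a lattice homomorphism and $B$ is commutative; and it is almost contractive with $\omega_{C^{\pm}} = v^{\pm}\omega_{T^{\pm}} = p^{\pm}$, using order-continuity of multiplication by a positive element in the Dedekind complete $f$-algebra $B$. As $p^{\pm}$ is idempotent, Theorem~\ref{Jaber} places $C^{\pm}$ in $\mathfrak{C}(A,B)$. To recover $T^{\pm}$, observe that for $f \in U_{A}$,
\[
0 \;\le\; (I_{B} - p^{\pm})T^{\pm}(f) \;\le\; (I_{B} - p^{\pm})\omega_{T^{\pm}} \;=\; 0,
\]
so the regular operator $f \mapsto (I_{B} - p^{\pm})T^{\pm}(f)$ vanishes on $U_{A}$, hence identically by the last Proposition of Section~2. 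Therefore $\omega_{T^{\pm}} C^{\pm} = p^{\pm}T^{\pm} = T^{\pm}$.

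Finally, set $C := C^{+} + C^{-}$ and $\omega := \omega_{T^{+}} - \omega_{T^{-}}$. The orthogonality $v^{+}v^{-} = 0$ shows $C$ is separating, and a short sup-of-sums argument (using positivity of $T^{\pm}$ on the join $f \vee g \in U_{A}$) yields $\omega_{C} = p^{+} + p^{-}$, which is idempotent since $p^{+} \perp p^{-}$; Theorem~\ref{Jaber} then makes $C$ a composition operator. The identities $\omega_{T^{+}}v^{-} = 0 = \omega_{T^{-}}v^{+}$ kill the cross terms in the expansion of $\omega C$, leaving $\omega C = \omega_{T^{+}}C^{+} - \omega_{T^{-}}C^{-} = T^{+} - T^{-} = T$, as required. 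The main obstacle I expect is the band bookkeeping at the setup stage: the careful replacement of $v^{\pm}$ by $p^{\pm}v^{\pm}$ is exactly what delivers the orthogonality $v^{+} \perp v^{-}$, without which neither the separating property of $C$ nor the cross-term cancellation in $\omega C$ can be obtained.
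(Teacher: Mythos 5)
Your proposal is correct and follows essentially the same route as the paper's proof: Proposition \ref{P} and Lemma \ref{L} to make $T^{\pm}$ almost contractive with von Neumann regular $\omega_{T^{\pm}}$, the candidates $C^{\pm}=v^{\pm}T^{\pm}$ certified by Theorem \ref{Jaber} via idempotency of $\omega_{C^{\pm}}$, recovery of $T^{\pm}=\omega_{T^{\pm}}C^{\pm}$ through the vanishing-on-$U_{A}$ proposition, and gluing. The only (harmless) variations are your normalization $v^{\pm}\mapsto p^{\pm}v^{\pm}$ to force $v^{+}v^{-}=0$ -- the paper avoids this by using $T^{+}(f)T^{-}(f)=0$ and $\omega_{T^{-}}\wedge T^{+}(f)=0$ directly -- and your second appeal to Theorem \ref{Jaber} for $C=C^{+}+C^{-}$ where the paper verifies $C(f^{2})=C(f)^{2}$ by hand.
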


\begin{proof}
We first assume that $T$ is positive almost contractive operator such that
$\omega_{T}$ is a von Numan regular. Since $\omega_{T}$ is positive then there
exist $0\leq v\in B$ such that $\omega_{T}^{2}v=\omega_{T}$. We claim that the
linear operator $C=vT$ is a composition operator. Clearly, $C$ is a positive
and a separating map. Moreover,
\[
\omega_{C}=\sup_{f\in U_{A}}C(f)=v\omega_{T}.
\]

Observe now that $\omega_{C}^{2}=v^{2}\omega_{T}^{2}=\omega_{C}$. This means
that $\omega_{C}$ is an idempotent element, which together with Theorem
\ref{Jaber} yields that $C$ is a composition operator. We claim that
$T=\omega_{T}C$. For this end, pick $f\in U_{A}$. We have
\[
(T-\omega_{T}C)f=Tf-\omega_{T}vTf=(I_{B}-\omega_{T}v)Tf
\]
So,
\[
\sup_{f\in U_{A}}\left\vert (T-\omega_{T}C)f\right\vert =\left\vert
I_{B}-\omega_{T}v\right\vert \omega_{T}=\left\vert \omega_{T}-\omega_{T}%
^{2}v\right\vert =0
\]
Thus $(T-\omega_{T}C)f=0$ for all $f\in U_{A}$. It follows that $T-\omega
_{T}C=0$.

Let's discuss the general case. Since $T$ is almost contractive and
$\omega_{T}$ is von Nuemann element, using proposition \ref{P} and lemma
\ref{L} we derive that $T^{+}$ and $T^{-}$ are almost contractive operators.
Moreover, $\omega_{T^{+}}$ and $\omega_{T^{-}}$ are von Nuemann regular
elements. By the positive case, we derive that there exist $v_{1},v_{2}\in B$
such that the mappings $C_{1}=v_{1}T^{+}$ and $C_{2}=v_{2}T^{-}$ are
composition operators and $T^{+}=\omega_{T^{+}}C_{1}$ and $T^{-}=\omega
_{T^{-}}C_{2}$. We claim that

\begin{description}
\item[(i).] $C=C_{1}+C_{2}$ is a composition operator

\item[(ii).] $T=(\omega_{T^{+}}-\omega_{T^{-}})C$.
\end{description}

To prove (i) observe that
\[
C_{1}(f)C_{2}(f)=v_{1}v_{2}T^{+}(f)T^{-}(f)=0\text{ for all }f\in A
\]
It follows that for all $f\in A$,
\[
C(f^{2})=C_{1}(f^{2})+C_{2}(f^{2})=C_{1}(f)^{2}+c_{2}(f)^{2}=(C_{1}%
(f)+C_{2}(f))^{2}=C(f)^{2}\text{.}%
\]
To prove (ii), pick $0\leq f\in A$. The equality $T^{+}(f)\wedge T^{-}(g)=0$
holds for all $g\in U_{A}$, then $\omega_{T^{-}}\wedge T^{+}(f)=0$ which
implies that $\omega_{T^{-}}C_{1}(f)=0$. The same argument shows that
$\omega_{T^{+}}C_{2}(f)=0$ for all $0\leq f\in A$. Hence,
\[
(\omega_{T^{+}}-\omega_{T^{-}})C=\omega_{T^{+}}C_{1}-\omega_{T^{-}}C_{2}%
=T^{+}-T^{-}=T\text{.}%
\]
This completes the proof.
\end{proof}

Keeping in mind that any separating operator $T:A\longrightarrow B$ with $A$
is unital $f$-algebra is automatically almost contractive and applying the
above Theorem, we obtain the Theorem 4.2 in \cite[Theorem 4.2]{AAB}.

\begin{corollary}
Let $A$ be an $f$-algebra with unit element $e$ and $B$ be a semiprime
$f$-algebra. Let $T\in\mathcal{L}^{r}(A,B)$ with $Te$ von Neumann regular.
Then T is separating if and only if there exists $C\in\mathfrak{C}(A,B)$ such
that $T=TeC.$
\end{corollary}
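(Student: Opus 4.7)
The plan is to reduce to Theorem \ref{Cent} by observing that both of its hypotheses hold automatically when $A$ has a unit element. Recall from Example 2 preceding Proposition \ref{P1} that any separating operator $T\in\mathcal{L}^{r}(A,B)$ with $A$ unital is almost contractive and satisfies $\omega_{T}=|Te|$.

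The first step is to transfer von Neumann regularity from $Te$ to $|Te|$. Starting from $(Te)g^{2}=Te$ for some $g\in B$, the $f$-algebra identity $|ab|=|a|\,b$ (valid for $b\geq 0$), applied with $b=g^{2}$, gives
\[
|Te|\,g^{2}=|(Te)g^{2}|=|Te|,
\]
so $\omega_{T}=|Te|$ is von Neumann regular and Theorem \ref{Cent} produces a $C\in\mathfrak{C}(A,B)$ such that $T=\omega C$ with $\omega=\omega_{T^{+}}-\omega_{T^{-}}$.

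Next I would identify $\omega$ with $Te$. Since $e\in U_{A}$ is the largest element of $U_{A}$ and $T^{\pm}$ are positive, one has $\omega_{T^{\pm}}=\sup T^{\pm}(U_{A})=T^{\pm}(e)$. By Proposition \ref{P}, $T^{+}(e)\wedge T^{-}(e)=\omega_{T^{+}}\wedge\omega_{T^{-}}=0$, while $T^{+}(e)-T^{-}(e)=Te$. Uniqueness of the disjoint decomposition then forces $T^{+}(e)=(Te)^{+}$ and $T^{-}(e)=(Te)^{-}$, whence $\omega=(Te)^{+}-(Te)^{-}=Te$ and $T=Te\,C$.

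For the converse, if $T=Te\,C$ with $C\in\mathfrak{C}(A,B)$, then whenever $fg=0$ in $A$ we have $(Tf)(Tg)=(Te)^{2}C(f)C(g)=(Te)^{2}C(fg)=0$, so $T$ is separating. The only genuinely new step beyond Theorem \ref{Cent} is the lifting of von Neumann regularity from $Te$ to $|Te|$; the remainder is bookkeeping on $T^{+}(e)$ and $T^{-}(e)$ using Proposition \ref{P}.
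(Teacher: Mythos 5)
Your proposal is correct and follows essentially the same route as the paper: lift von Neumann regularity from $Te$ to $\omega_{T}=\left\vert Te\right\vert$, invoke Theorem \ref{Cent}, and identify $\omega_{T^{+}}-\omega_{T^{-}}=T^{+}e-T^{-}e=Te$. You merely fill in details the paper leaves implicit (the computation $\left\vert Te\right\vert g^{2}=\left\vert (Te)g^{2}\right\vert=\left\vert Te\right\vert$ and the identification $\omega_{T^{\pm}}=T^{\pm}(e)$), which is fine.
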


\begin{proof}
It is easy to see that since $Te$ is von Neumann regular then $\omega
_{T}=\left\vert Te\right\vert $ is a von Neumann regular element. According to
Theorem \ref{Cent} we derive that there exist $C\in\mathfrak{C}(A,B)$ such
that $T=(T^{+}e-T^{-}e)C=TeC$.
\end{proof}

Recall that if $B$ is semiprime $f$-algebra then \textrm{Orth}$(B)^{\sigma}$
is a Dedekind complete unital $f$-algebra. Moreover, the maximal ring of
quotient $\mathrm{Q(}$\textrm{Orth}$(B)^{\sigma})$ is now a Von Nuemann
regular $f$-algebra, that is, all elements are von Neumann regular . Moreover,
$B$ is an $f$-subalgebra of $\mathrm{Q(}$\textrm{Orth}$(B)^{\sigma})$. This
leads, via Theorem \ref{Cent}, to the following.

\begin{theorem}
\label{F}Let $A$ and $B$ be a semiprime $f$-algebra and $T\in\mathcal{L}%
^{r}(A,B)$ be a separating map. Then the following are equivalents:

\begin{enumerate}
\item[(i)] $T$ is almost contractive operator

\item[(ii)] There exist $\omega\in\mathrm{Orth(B)}^{\sigma}$ and $C$
$\in\mathfrak{C}(A,\mathrm{Q(Orth(B)}^{\sigma}))$ such that $T=\omega C$.
\end{enumerate}
\end{theorem}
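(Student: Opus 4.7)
The plan is to bootstrap Theorem \ref{Cent} using the key ring-theoretic fact recalled just before the statement: every element of $\mathrm{Q}(\mathrm{Orth}(B)^{\sigma})$ is von Neumann regular, and $B$ sits as an $f$-subalgebra inside $\mathrm{Q}(\mathrm{Orth}(B)^{\sigma})$. Once we enlarge the codomain from $B$ to $\mathrm{Q}(\mathrm{Orth}(B)^{\sigma})$, the hypothesis in Theorem \ref{Cent} on $\omega_T$ becomes free, and the decomposition $T=\omega C$ drops out automatically. The converse direction reduces to the observation that any composition operator is contractive.

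For $(i)\Rightarrow (ii)$: I would start by viewing $T$ as a regular separating map $T\colon A\longrightarrow \mathrm{Q}(\mathrm{Orth}(B)^{\sigma})$ via the chain of $f$-algebra inclusions $B\hookrightarrow \mathrm{Orth}(B)^{\sigma}\hookrightarrow \mathrm{Q}(\mathrm{Orth}(B)^{\sigma})$. Almost contractivity of $T$ is preserved under this passage, and $\omega_T\in \mathrm{Orth}(B)^{\sigma}$ is unchanged; but now, because $\mathrm{Q}(\mathrm{Orth}(B)^{\sigma})$ is von Neumann regular, $\omega_T$ is automatically a von Neumann regular element of the target. Applying Theorem \ref{Cent} to $T$ regarded as a map into $\mathrm{Q}(\mathrm{Orth}(B)^{\sigma})$ produces a composition operator $C\in \mathfrak{C}(A,\mathrm{Q}(\mathrm{Orth}(B)^{\sigma}))$ and a weight $\omega=\omega_{T^{+}}-\omega_{T^{-}}\in \mathrm{Orth}(B)^{\sigma}$ with $T=\omega C$, exactly as claimed.

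For $(ii)\Rightarrow (i)$: Assume $T=\omega C$ with $\omega\in \mathrm{Orth}(B)^{\sigma}$ and $C\in \mathfrak{C}(A,\mathrm{Q}(\mathrm{Orth}(B)^{\sigma}))$. Since $C$ is a composition operator it is automatically contractive, that is, $|Cf|\leq I$ for every $f\in U_A$ (this is precisely the first example listed after the definition of almost contractive). Consequently, for every $f\in U_A$,
\[
|Tf| \;=\; |\omega|\,|Cf| \;\leq\; |\omega|,
\]
so $\sup|T(U_A)|\leq |\omega|\in \mathrm{Orth}(B)^{\sigma}$, which means $T$ is almost contractive.

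The only real obstacle I anticipate is bookkeeping around the codomains: one has to be comfortable moving $T$ back and forth between $B$, $\mathrm{Orth}(B)^{\sigma}$, and $\mathrm{Q}(\mathrm{Orth}(B)^{\sigma})$ without losing the ``almost contractive'' or ``separating'' data. Once that point is accepted — and it is justified by the paragraph preceding the theorem — the argument is a direct invocation of Theorem \ref{Cent} in one direction and a one-line bound in the other.
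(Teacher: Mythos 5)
Your proposal is correct and is exactly the argument the paper intends: the paper gives no separate proof of this theorem, stating only that it follows from Theorem \ref{Cent} once $T$ is viewed as a map into the von Neumann regular $f$-algebra $\mathrm{Q}(\mathrm{Orth}(B)^{\sigma})$, where the regularity hypothesis on $\omega_T$ is automatic. Your converse direction (composition operators are contractive, hence $|Tf|\leq|\omega|$ for $f\in U_A$) likewise matches the remark the paper makes earlier that any weighted composition operator is almost contractive.
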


Using proposition \ref{P1} and Theorem \ref{F} we get an algebraic version of
Jeang-Wong Theorem (see for instance \cite[Theorem 1]{Jeang-wong})

\begin{corollary}
Let $X$ and $Y$ be locally compact Hausdorff spaces and $T$ be a regular
separating map from $C_{0}(X)$ into $C_{0}(Y)$. Then there exists $\omega\in
C_{b}(Y)^{\sigma}$ and a composition operator $S:C_{0}(X)\longrightarrow
Q(C_{b}(Y)^{\sigma})$ such that $T(f)=\omega S(f)$ for all $f\in C_{0}(X)$.
\end{corollary}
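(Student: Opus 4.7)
The statement is essentially a direct corollary of the two results cited just before it, so the plan is simply to assemble them. The key structural observation is the standard identification $\mathrm{Orth}(C_{0}(Y)) = C_{b}(Y)$, the $f$-algebra of bounded continuous real-valued functions on $Y$; this identification is already invoked inside the proof of Proposition~\ref{P1}. Under it, $\mathrm{Orth}(C_{0}(Y))^{\sigma} = C_{b}(Y)^{\sigma}$ and the maximal ring of quotients is $Q(C_{b}(Y)^{\sigma})$, which matches precisely the targets appearing in the conclusion of the corollary.

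First I would note that both $C_{0}(X)$ and $C_{0}(Y)$ are semiprime $f$-algebras, so Theorem~\ref{F} applies with $A = C_{0}(X)$ and $B = C_{0}(Y)$. Next, I would appeal to Proposition~\ref{P1}: the hypothesis that $T$ be a regular separating map from $C_{0}(X)$ into $C_{0}(Y)$ forces $T$ to be almost contractive, with $\omega_{T}\in C_{b}(Y)^{\sigma}$. In other words, the extra hypothesis (i) of Theorem~\ref{F} comes for free in the $C_{0}$--$C_{0}$ setting.

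Applying the implication (i)$\Rightarrow$(ii) of Theorem~\ref{F} now produces exactly the data requested: a vector $\omega \in \mathrm{Orth}(C_{0}(Y))^{\sigma} = C_{b}(Y)^{\sigma}$ and a composition operator $S \in \mathfrak{C}(C_{0}(X), Q(C_{b}(Y)^{\sigma}))$ with $T(f) = \omega\, S(f)$ for every $f\in C_{0}(X)$. There is no genuine obstacle here; the only point requiring care is to make the identification of the orthomorphism algebra explicit, so that the statement reads as a faithful translation of Theorem~\ref{F} in the $C_{0}$-setting, and to cite Proposition~\ref{P1} for the automatic almost contractivity that removes the hypothesis from the general theorem.
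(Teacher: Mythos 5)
Your proposal is correct and follows exactly the paper's route: the corollary is stated as an immediate consequence of Proposition~\ref{P1} (automatic almost contractivity of regular separating maps in the $C_{0}(X)$--$C_{0}(Y)$ setting) combined with Theorem~\ref{F}, under the identification $\mathrm{Orth}(C_{0}(Y))=C_{b}(Y)$. Nothing further is needed.
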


At the end of this section we will give a complete answer to problem 6.1in
\cite{AAB}.

\begin{theorem}
(see \cite[Theorem 6.1]{AAB}) Let $B$ be a semiprime $f$-algebra and $p\in
B^{+}$. Then the following assertions are equivalents

\begin{description}
\item[(i)] $p$ is von Neumann regular.

\item[(ii)] For every $f$-algebra $A$ with identity $e$ and every separating
operator $T\in\mathcal{L}^{r}(A,B)$ with $p=Te$, there exists $C$
$\in\mathfrak{C}(A,B)$ such that $T=pC$ and $pB=CeB$.
\end{description}
\end{theorem}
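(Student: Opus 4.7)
The plan is to prove the equivalence by appealing to Theorem~\ref{Cent} for the direction (i) $\Rightarrow$ (ii), and by using a one-dimensional test algebra for the direction (ii) $\Rightarrow$ (i).

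For (i) $\Rightarrow$ (ii), fix a unital $f$-algebra $A$ with identity $e$ and a separating $T \in \mathcal{L}^r(A,B)$ with $Te = p \geq 0$. Because $A$ is unital, item~(2) immediately following Definition~1 already gives that $T$ is almost contractive with $\omega_T = |Te| = p$, and $p$ is von Neumann regular by hypothesis. By Proposition~\ref{P}, $\omega_{T^+}$ and $\omega_{T^-}$ are disjoint components of $\omega_T = p$, and in the unital case these coincide with $T^+ e$ and $T^- e$. The identity $T^+ e - T^- e = p \geq 0$ together with disjointness forces $\omega_{T^-} = p^- = 0$; the vanishing criterion stated at the end of Section~2 (a regular map between semiprime $f$-algebras is zero iff it vanishes on $U_A$) then yields $T^- = 0$. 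We are therefore in the positive case of Theorem~\ref{Cent}: pick $v \in B^+$ with $p^2 v = p$, so that $C := vT$ is a composition operator and $T = pC$. A short calculation gives $Ce = vp$, and since $vp$ is idempotent with $p \cdot vp = p$, commutativity of the $f$-algebra $B$ forces $vp = p \cdot v \in pB$ on the one hand, and $pb = (p \cdot vp)b = vp \cdot (pb) \in vpB$ for every $b \in B$ on the other; hence $pB = vpB = CeB$.

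For (ii) $\Rightarrow$ (i) the cleanest test algebra is $A = \mathbb{R}$ with identity $1$. Define $T:\mathbb{R} \to B$ by $T(\lambda) := \lambda p$. Then $T$ is regular and separating ($\lambda\mu = 0$ gives $T(\lambda)T(\mu) = \lambda\mu p^2 = 0$), and $Te = p$. Hypothesis~(ii) produces $C \in \mathfrak{C}(\mathbb{R},B)$ with $T = pC$ and $pB = CeB$. Setting $q := Ce$, multiplicativity of $C$ gives $q = C(1 \cdot 1) = C(1)^2 = q^2$, so $q$ is idempotent. From $pB = qB$ we extract $b,c \in B$ with $p = qb$ and $q = pc$; multiplying the first by $q$ yields $qp = q^2 b = qb = p$, and substituting $q = pc$ into $qp = p$ gives $p = (pc)p = p^2 c$. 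Thus $p$ is von Neumann regular with witness $v := c$.

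The main delicacy is the ideal identity $pB = CeB$ in the forward direction. The factorisation $T = pC$ drops out of Theorem~\ref{Cent} once one verifies the reduction $T^- = 0$, but the extra ideal statement forces one to open the black box of that theorem and use the specific witness $C = vT$ for the chosen $v$ with $p^2 v = p$; the equality $pB = vpB$ then rests on idempotency of $vp$ and on commutativity of $B$. The reverse direction is, by contrast, essentially formal: the one-dimensional test collapses the hypothesis to an algebraic manipulation in $B$ from which the von Neumann regularity of $p$ is immediate.
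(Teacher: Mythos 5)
Your proof is correct in substance and actually goes further than the paper's own argument, which proves only the implication (ii) $\Rightarrow$ (i) and cites \cite{AAB} for (i) $\Rightarrow$ (ii). Your forward direction is a self-contained derivation from Theorem \ref{Cent}: you reduce to the positive case by combining Proposition \ref{P} (disjointness of $\omega_{T^{+}}$ and $\omega_{T^{-}}$) with the vanishing criterion to get $T^{-}=0$, then open up the proof of Theorem \ref{Cent} to use the explicit witness $C=vT$, so that $Ce=vp$ and the ideal identity $pB=vpB=CeB$ can be checked by hand from $(vp)^{2}=vp$ and $p\cdot vp=p$. This supplies exactly the extra conclusion $pB=CeB$ that the paper's Corollary (which only yields $T=TeC$) does not address, so it is a worthwhile addition. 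For the converse, your test pair $A=\mathbb{R}$, $T(\lambda)=\lambda p$ is more careful than the paper, whose proof tacitly assumes that some unital $A$ and separating $T$ with $Te=p$ exist without exhibiting one; after that the algebra coincides with the paper's ($q:=Ce$ is idempotent, $q=q^{2}\in qB=pB$ gives $q=pc$, and $qp=p$ gives $p=p^{2}c$). One step you should repair: from the set equality $pB=qB$ alone you cannot ``extract $b$ with $p=qb$'', since in a non-unital $B$ there is no a priori reason that $p$ belongs to $pB$. The lapse is harmless, because the identity $qp=p$ you want from it follows directly from $T=pC$ evaluated at $e$, namely $p=Te=pCe=pq$; this is precisely the route the paper takes, needing only $q=q^{2}\in qB=pB$ to produce the von Neumann witness.
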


\begin{proof}
The implication (i) implies (ii) is already proved in \cite{AAB}. We prove
only the converse. Let $A$ be an $f$-algebra with identity $e$ and
$T\in\mathcal{L}^{r}(A,B)$ be a separating operator with $p=Te$. By hypothesis
there exists $C$ $\in\mathfrak{C}(A,B)$ such that $T=pC$ and $pB=CeB$. We have
$p=Te=pCe$. On the other hand, there exists a vector $v\in B$ such that
$Ce=(Ce).Ce=pv$. Combining the two equalities we get that $p=p^{2}v$.
\end{proof}

\section{Vector spaces of almost contractive separating operators}

We call a vector space of almost contractive separating regular operators from
$A$ into $B$ any vector subspace $W$ of $\mathcal{L}^{r}(A,B)$ the operators
in which are almost contractive and separating. Our main purpose is to give a
complete description of such vector spaces. 

It has been proved in \cite[Theorem 5.1]{AAB} that if $A$ is supposed to have
a unit element then any vector space of separating regular operators from $A$
into $B$ is contained in a one-dimensional $B$-submodule of the $B$-module
$\mathcal{L}^{r}(A,$\textrm{Q(}$B))$ generated by some composition operator
$C$ from $A$ into $\mathrm{Q}(B)$.

As we shall see, this result can be extended for all vector spaces of almost
contractive separating regular operators between semiprime $f$-algebras.

\begin{theorem}
A subset $\mathfrak{M}$ of $\mathcal{L}^{r}(A,B)$ is a vector space of almost
contractive separating regular operators from $A$ into $B$ if and only if
there exist $C$ $\in\mathfrak{C}(A,\mathrm{Q(Orth(B)}^{\sigma}))$ such that
for all $T\in\mathfrak{M}$ there exist $\omega\in\mathrm{Orth(B)}^{\sigma}$
such that $T=\omega C$. 
\end{theorem}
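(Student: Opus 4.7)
The easy implication is the right-to-left one. Assuming that a single $C \in \mathfrak{C}(A, \mathrm{Q}(\mathrm{Orth}(B)^{\sigma}))$ exists such that every $T \in \mathfrak{M}$ has the form $T = \omega_{T} C$ for some $\omega_{T} \in \mathrm{Orth}(B)^{\sigma}$, the set $\mathfrak{M}$ is visibly closed under linear combinations, each $T$ is separating (since $T(f)T(g) = \omega_{T}^{2}\, C(fg) = 0$ whenever $fg = 0$), and each $T$ is almost contractive (since $C$, being a composition operator, maps $U_{A}$ into $[0, I_{B}]$ by Theorem~\ref{Jaber}, so $|T(f)| \le |\omega_{T}|$ for all $f \in U_{A}$).

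For the converse, the plan is to apply Theorem~\ref{F} individually to each $T \in \mathfrak{M}$, obtaining a decomposition $T = \omega_{T} C_{T}$ with $\omega_{T} \in \mathrm{Orth}(B)^{\sigma}$ and $C_{T} \in \mathfrak{C}(A, \mathrm{Q}(\mathrm{Orth}(B)^{\sigma}))$, and then to align the various $C_{T}$'s into a single composition operator $C$. Since $\mathrm{Q}(\mathrm{Orth}(B)^{\sigma})$ is von Neumann regular, each $\omega_{T}$ carries a support idempotent $e_{T}$, and $C_{T}$ is uniquely determined by the equation $T = \omega_{T} C_{T}$ on the band $e_{T}\,\mathrm{Q}$ but free on the complementary band; the task reduces to coherently matching these free choices.

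The required compatibility comes from the vector-space structure. For $T_{1}, T_{2} \in \mathfrak{M}$, the sum $T_{1} + T_{2} \in \mathfrak{M}$ is separating, so expanding $(T_{1} + T_{2})(f)(T_{1} + T_{2})(g) = 0$ for $fg = 0$ and cancelling the diagonal terms yields the cross relation $T_{1}(f) T_{2}(g) + T_{2}(f) T_{1}(g) = 0$. Substituting the factorizations and restricting to the common support $e_{1} e_{2}$, on which $\omega_{1} \omega_{2}$ is invertible in $\mathrm{Q}$, this simplifies to
\[
C_{1}(f) C_{2}(g) + C_{2}(f) C_{1}(g) = 0 \quad\text{whenever } fg = 0.
\]
Working first with $f, g \ge 0$ (so that both summands are nonnegative in the positive cone) forces $C_{1}(f) C_{2}(g) = 0$ for every disjoint pair of positive elements; translating this for the two ring homomorphisms $e_{1} e_{2} C_{1}$ and $e_{1} e_{2} C_{2}$ from $A$ into $e_{1} e_{2}\,\mathrm{Q}$ and using the last Proposition of Section~2 to test equality on $U_{A}$ yields $e_{1} e_{2} C_{1} = e_{1} e_{2} C_{2}$.

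The main technical obstacle is the gluing step that assembles the local pieces into a global $C$. Setting $e := \bigvee_{T \in \mathfrak{M}} e_{T}$ in the Dedekind complete unital $f$-algebra $\mathrm{Orth}(B)^{\sigma}$, I would apply Zorn's lemma to pairs $(p, C_{p})$, where $p \le e$ is an idempotent and $C_{p}: A \to p\,\mathrm{Q}$ is a composition operator satisfying $p\, e_{T}\, C_{p} = p\, e_{T}\, C_{T}$ for every $T \in \mathfrak{M}$. The pairwise compatibility above guarantees that finite amalgamations exist (so the poset is nonempty and upward-chain-complete), and a maximal element must have $p = e$, otherwise one could enlarge by appending $(e - p)\, e_{T_{0}} C_{T_{0}}$ for a suitably chosen $T_{0}$. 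Extending $C_{p}$ by any composition operator on the complementary band $I - e$ produces a global $C \in \mathfrak{C}(A, \mathrm{Q}(\mathrm{Orth}(B)^{\sigma}))$. Since $\omega_{T} = \omega_{T} e_{T}$, we conclude $\omega_{T} C = \omega_{T}(e_{T} C) = \omega_{T}(e_{T} C_{T}) = \omega_{T} C_{T} = T$ for every $T \in \mathfrak{M}$, as required.
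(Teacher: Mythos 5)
Your strategy for the hard direction --- factor each $T\in\mathfrak{M}$ individually through Theorem \ref{F} as $T=\omega_T C_T$ and then glue the local composition operators $C_T$ into one global $C$ --- is genuinely different from the paper's, and it breaks down at the step you state most casually. From the separating character of $T_1+T_2$ you correctly extract $e_1e_2\,C_1(f)C_2(g)=0$ for disjoint $f,g\geq0$, i.e.\ $C_1$ and $C_2$ are \emph{jointly} separating on the common support. But the jump from ``jointly separating'' to ``$e_1e_2C_1=e_1e_2C_2$'' is precisely the nontrivial content of the theorem, and your justification does not deliver it: the last Proposition of Section 2 reduces $e_1e_2(C_1-C_2)=0$ to checking it on $U_A$, yet joint separation gives no handle on $C_1(f)-C_2(f)$ for a \emph{single} $f\in U_A$ --- disjointness relations only compare values at disjoint pairs, and two distinct lattice-and-ring homomorphisms are not distinguished by such relations without further input. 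The missing ingredient is the Buskes--van Rooij theorem \cite{BV} that a positive orthosymmetric bilinear map is symmetric: applied to $b(f,g)=C_1(f)C_2(g)$ it yields $C_1(f)C_2(g)=C_1(g)C_2(f)$, and taking suprema over $g\in U_A$ then gives $\omega_{C_2}C_1(f)=\omega_{C_1}C_2(f)$, from which the identification on the common support follows. Without this (or an equivalent), your compatibility claim is an assertion, not a proof.

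Even granting compatibility, the Zorn's lemma amalgamation needs more than you supply: passing to the supremum of a chain of idempotents requires forming laterally increasing suprema of the glued operators in $\mathrm{Q(Orth(B)}^{\sigma})$ and verifying that the limit is still linear and multiplicative, which rests on completeness properties of that algebra you do not establish. The paper avoids the gluing problem entirely: following the first part of the proof of Theorem 5.1 in \cite{AAB}, it enlarges $\mathfrak{M}$ to a vector lattice of operators admitting a single positive weak order unit $E$, factors $E=\omega_E C$ once via Theorem \ref{F}, and then pulls every $T\in\mathfrak{M}$ onto that \emph{same} $C$ by applying the symmetry of the orthosymmetric map $b(f,g)=E(f)T(g)$ and the fact that $E$ is a weak order unit. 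If you keep your route, you must prove the compatibility lemma via orthosymmetry and justify the chain step; otherwise the dominating-unit argument is the cleaner path. (Your easy direction is essentially fine, though note that factoring every element of $\mathfrak{M}$ through a common $C$ does not by itself make $\mathfrak{M}$ closed under addition, and one must also check that $\omega C$ actually maps into $B$; both are loosenesses already present in the statement itself.)
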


\begin{proof}
Let $\mathfrak{M}$ be vector space of $\mathcal{L}^{r}(A,B)$ such that all
operators in $\mathfrak{M}$ are almost contractive separating operators. We
shall see $\mathfrak{M}$ as a vector subspace of $\mathcal{L}^{r}%
(A,\mathrm{Q(Orth(B)}^{\sigma}))$. It is easy to see that if $T,S$
$\in\mathfrak{M}$ then $TfSg=0$ for all $f$,$g\in A$ such that $fg=0$.

Examining the first part of the proof of Theorem 5.1 in \cite{AAB} we can
derive that $\mathfrak{M}$ is contained in a maximal vector lattice denoted
egain by $\mathfrak{M}$ where the lattice operations are given pointwise.
Moreover, $\mathfrak{M}$ has a positive weak order unit $E$ $\in
\mathcal{L}^{r}(A,\mathrm{Q(Orth(B)}^{\sigma}))$. From Theorem \ref{F}, there
exists $C$ $\in\mathfrak{C}(A,\mathrm{Q(Orth(B)}^{\sigma}))$ such that
$E=\omega_{E}C$.

We claim that if $T\in\mathfrak{M}$ then $T=\omega C.$ For this end, let $T$
be a positive operator in $\mathfrak{M}$. Observe that the bilinear mapping
$b:A\times A\longmapsto\mathrm{Q(Orth(B)}^{\sigma})$ defined by
$b(f,g)=E(f)T(g)$ is a positive orthosymmetric bilinear map. So $b$ is now a
symmetric bilinear map (see \cite{BV}). Hence, the equality
\[
E(f)T(g)=E(g)T(f)\text{ holds for all }f,g\in A\text{.}%
\]
We get,
\[
\omega_{E}T(g)=\sup_{f\in U_{A}}E(f)T(g)=\sup_{f\in U_{A}}E(g)T(f)=\omega
_{T}E(g)\text{ for all }g\in A\text{.}%
\]
Thus, $\omega_{E}(T(f)-\omega_{T}C(f))=0$ for all $f\in A$. Then we derive
that
\[
\omega_{E}\wedge\left\vert T-\omega_{T}C\right\vert (f)=0\text{ for all }f\in
A\text{.}%
\]
But, this implies that $E\wedge\left\vert T-\omega_{T}C\right\vert =0$.
Consequently, $T=\omega_{T}C$ because $E$ is a weak order unit in
$\mathfrak{M}$.

Let $T\in\mathfrak{M}$ since $T=T^{+}-T^{-}$ and $T^{+}$ and $T^{-}$ belongs
to $\mathfrak{M}$, we have $T=(\omega_{T^{+}}-\omega_{T^{-}})C$. Putting
\[
W=\left\{  \omega:\text{ }T=\omega C\text{ for some }T\in\mathfrak{M}\right\}
\]
we conclude that $W$ is a vector subspace of $B$ and that
\end{proof}

\end{document}